\newtheorem{thm}{Theorem}[section]
\newtheorem{lem}[thm]{Lemma}
\theoremstyle{definition}
    \def\ps@pprintTitle{%
       \let\@oddhead\@empty
       \let\@evenhead\@empty
       \let\@oddfoot\@empty
       \let\@evenfoot\@oddfoot
    }
\begin{document}
\title{Edges and Vertices in a Unique Signed Circle in a Signed Graph}
\author{Richard Behr}
\address{Department of Mathematics, Binghamton University, Binghamton, NY 13902}
\ead{behr@math.binghamton.edu}
 \begin{abstract} We examine the conditions under which a signed graph contains an edge or a vertex that is contained in a unique negative circle or a unique positive circle. For an edge in a unique signed circle, the positive and negative case require the same structure on the underlying graph, but the requirements on the signature are different. We characterize the structure of the underlying graph necessary to support such an edge in terms of bridges of a circle. We then use the results from the edge version of the problem to help solve the vertex version.
\end{abstract}
\begin{keyword}
signed graph \sep balance \sep negative circle \sep positive circle \sep bridge
\end{keyword}
\maketitle
\section*{Introduction}
A \emph{signed graph} is a graph in which each edge is assigned either a positive or negative sign. The sign of a circle (a connected, $2$-regular subgraph)  in a signed graph is defined to be the product of the signs of its edges. In many cases, the most important feature of a signed graph is the sign of each of its circles.  A signed graph that contains no negative circle is said to be \emph{balanced}, while a signed graph that contains at least one negative circle is \emph{unbalanced}. The purpose of this paper is to determine when a signed graph contains an edge or a vertex that is contained in a unique negative circle or a unique positive circle.

Signed graphs were invented by Harary in 1953 in order to help study a question in social psychology \cite{harary}. In 1956, Harary observed that an edge of a signed graph lies in some negative circle if and only if the block (maximal $2$-connected subgraph) containing it is unbalanced \cite{harary2}. Similarly, an edge lies in some positive circle if and only if it is not a balancing edge in its block (see Lemma \ref{negativebalance}). Our problem is related to these facts, but the added uniqueness condition creates many additional restrictions on both the structure of the underlying graph and the signature.

\section{Definitions}
\subsection{Graphs}
A \emph{graph} $G = (V(G),E(G))$ consists of a finite vertex set $V(G)$ and finite edge set $E(G)$. Each edge has a pair of vertices as its \emph{endpoints}, and we write $e{:}uv$ for an edge with endpoints $u$ and $v$. A \emph{link} is an edge with two distinct endpoints, and a \emph{loop} has two equal endpoints. We write $K_n$ for the complete graph on $n$ vertices. 

Let $H$ be a subgraph of $G$. Then for $v \in V(G)$, the \emph{degree of} $v$ \emph{in} $H$, denoted $\deg_H(v)$, is the number of edges in $H$ that are incident with $v$ (a loop counts twice).

A \emph{circle} $C$ is a connected $2$-regular subgraph. An edge $e \in E(G){\setminus}E(C)$ connecting two different vertices of $C$ is a \emph{chord}.

A \emph{path} $P=v_0,e_0,v_1,e_1,...,e_{n-1},v_n$ is a sequence of adjacent vertices and connecting edges that never repeats an edge or a vertex. We call $v_0$ and $v_n$ the \emph{endpoints} of $P$, while the other vertices are \emph{interior vertices}. 

We \emph{subdivide} an edge by replacing it with a path that has at least one edge. A \emph{subdivision} of $G$ is a graph obtained by subdividing some of the edges of $G$.

Given a circle $C$ of $G$, a \emph{bridge} of $C$ is either a connected component $D$ of $G{\setminus}V(C)$ along with all edges joining $D$ to $C$, or a chord of $C$. The \emph{vertices of attachment} of a bridge $D$ are the vertices in $V(D) \cap V(C)$. A path contained in $D$ that has different vertices of attachment for its endpoints is a \emph{path through $D$}.

A \emph{cutpoint} of $G$ is a vertex $v$ with the property that there exist subgraphs $H_1$ and $H_2$ each with at least one edge, such that $G=H_1 \cup H_2$ and $H_1 \cap H_2 = \{v\}$. An \emph{isthmus} is an edge whose removal increases the number of connected components. A \emph{block} of $G$ is a maximal subgraph that contains no cutpoint. Each edge is contained in exactly one block.  

\subsection{Signed Graphs}
A \emph{signed graph} $\Sigma$ is a pair $(G, \sigma)$, where $G$ is a graph (called the \emph{underlying graph}), and $\sigma : V(G) \rightarrow \{+,-\}$ is the \emph{signature}.

The \emph{sign} of a circle $C$ in $\Sigma$ is defined to be the product of the signs of its edges. Thus, a signed circle can be either positive or negative. A signed graph is \emph{balanced} if all of its circles are positive, and \emph{unbalanced} if it contains at least one negative circle.

A \emph{theta graph} consists of three paths with the same endpoints and no other vertices in common. The most useful thing about theta graphs in our context is the \emph{theta property}: every signed theta graph has either $1$ or $3$ positive circles. If two circles $C_1$ and $C_2$ intersect in a path with at least one edge, then $C_1 \cup C_2$  is a theta graph with third circle $C_1 \Delta C_2$ (we use $\Delta$ for symmetric difference). By the theta property, if $C_1$ and $C_2$ have the same sign then $C_1 \Delta C_2$ is positive, and otherwise $C_1 \Delta C_2$ is negative.

A \emph{switching function} on $\Sigma=(G,\sigma)$ is a function $\zeta : V(G) \rightarrow \{+,-\}$.  We can use $\zeta$ to modify $\sigma$, obtaining a new signature given by $\sigma^{\zeta}(e):=\zeta(v) \sigma(e) \zeta(w)$, where $v,w$ are the endpoints of $e$. The switched signed graph is written $\Sigma^{\zeta}:=(G,\sigma^{\zeta})$.  If $\Sigma '$ is obtained from $\Sigma$ via switching, we say $\Sigma'$ and $\Sigma$ are \emph{switching equivalent}, written $\Sigma' \sim \Sigma$. Switching is useful for us because of the following fact.

\begin{lem}[{Zaslavsky \cite{zaslav1}}, Soza{\'n}ski \cite{soz}]\label{sw2}Let $\Sigma_1$ and $\Sigma_2$ be signed graphs on the same underlying graph. Then, $\Sigma_1 \sim \Sigma_2$ if and only if $\Sigma_1$ and $\Sigma_2$ have the same collection of positive circles. In particular, $\Sigma$ is balanced if and only if it switches to an all-positive signature.
\end{lem}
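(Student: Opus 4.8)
The plan is to prove the biconditional in two directions and then deduce the ``in particular'' clause. For the forward direction, I would first show that switching never changes the sign of a circle. If $C$ is a circle and $\zeta$ a switching function, the sign of $C$ in $\Sigma^{\zeta}$ is $\prod_{e{:}vw\in E(C)}\zeta(v)\sigma(e)\zeta(w)$; since $C$ is connected and $2$-regular, each vertex of $C$ is an endpoint of exactly two edges of $C$ (with a loop at $v$ counted twice), so every factor $\zeta(v)$ appears an even number of times and cancels. Hence the circle-sign is a switching invariant, and switching-equivalent signed graphs have identical collections of positive circles.

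For the converse I would pass to a canonical form. It is enough to argue component by component, since circles live inside components and a switching function can be specified separately on each. Fixing a spanning tree $T$ (a spanning forest, in general) with a chosen root in each component, I would define, for a given signature $\sigma$, the switching function $\zeta_{\sigma}(v)=$ the $\sigma$-sign of the $T$-path from the root to $v$; a short computation shows $\sigma^{\zeta_{\sigma}}$ is positive on every edge of $T$. Call a signature that is $+$ on all of $T$ \emph{$T$-normalized}; so every signature is switching-equivalent to a $T$-normalized one.

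Now, given $\Sigma_1=(G,\sigma_1)$ and $\Sigma_2=(G,\sigma_2)$ with the same positive circles, I would replace each by a switching-equivalent $T$-normalized signature $\sigma_i'$; by the forward direction these still have the same positive circles. For an edge $e\notin E(T)$, let $C_e$ be its fundamental circle in $T+e$ (just $e$ itself if $e$ is a loop). Since the tree edges are all positive in $\sigma_1'$ and $\sigma_2'$, the sign of $C_e$ equals $\sigma_i'(e)$ for each $i$, and these signs agree; combined with $\sigma_1'=\sigma_2'=+$ on $E(T)$ this forces $\sigma_1'=\sigma_2'$, so $\Sigma_1\sim\Sigma_1'=\Sigma_2'\sim\Sigma_2$. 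The ``in particular'' then follows: $\Sigma$ is balanced precisely when it has the same positive circles (namely all of them) as the all-positive signature on $G$, which by the biconditional happens iff $\Sigma$ switches to that signature.

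I do not expect a serious obstacle; the one step needing care is building the canonical $T$-normalized form and verifying that $\zeta_{\sigma}$ really makes every tree edge positive, since after that the fundamental-circle comparison is automatic. If I wanted a slicker route, I would instead work over $\mathrm{GF}(2)$: a signature corresponds to its set of negative edges, switching corresponds to adding an edge cut, a circle is positive iff it meets the negative set in an even number of edges, and ``same positive circles'' says the symmetric difference of the two negative sets is orthogonal to the cycle space, hence lies in the cut space---i.e.\ the two signatures differ by a switching.
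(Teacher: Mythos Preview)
Your argument is correct and is in fact the standard proof of this classical result. However, there is nothing to compare it against: the paper does not prove Lemma~\ref{sw2} at all, but simply quotes it with attribution to Zaslavsky~\cite{zaslav1} and Soza\'nski~\cite{soz}. Your write-up supplies exactly what those references contain---the forward direction via the parity cancellation of the $\zeta$-factors around a circle, and the converse via spanning-tree normalization and comparison on fundamental circles---so it is a faithful expansion of the cited material rather than an alternative route. The $\mathrm{GF}(2)$ reformulation you sketch at the end (negative-edge sets modulo the cut space, with ``same positive circles'' meaning the symmetric difference is orthogonal to the cycle space) is likewise standard and equivalent; either version would be acceptable here.
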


If $\Sigma$ can be switched so that it has a single negative edge $b$, we call $b$ a \emph{balancing edge}. The deletion of a balancing edge yields a balanced signed graph. Moreover, if $b$ is a balancing edge, 
the negative circles of $\Sigma$ are precisely those that contain $b$.

Assume an edge $e$ is contained in at least one circle. Then $e$ is contained in only positive circles if and only if the block containing it is balanced, and $e$ is contained in only negative circles if and only if it is a balancing edge in its block (Lemma \ref{negativebalance}). In other words, $e$ is contained in some negative circle if and only if the block containing $e$ is unbalanced (as discovered by Harary \cite{harary2}), and $e$ is contained in some positive circle if and only if $e$ is not balancing in the block containing it.
\section{Batteries}

Let us give a name to the main object of study in this paper. A \emph{battery} is an edge of $\Sigma$ that is contained in a unique negative or positive circle $C$. An edge $e$ may be a \emph{negative battery} or a \emph{positive battery}, depending on the sign of $C$. We write $[e,C,-]$ for a negative battery $e$ that is contained in the unique negative circle $C$, and similarly $[e,C,+]$ for a positive battery. The advantage of this notation is that it enables us to keep track of both $C$ and its sign.

We wish to determine when a given edge $e$ is a battery. This problem is uninteresting when $e$ is an isthmus of $\Sigma$ ($e$ is not contained in a circle), so throughout we assume $e$ is contained in at least one circle. Since the block $B$ containing $e$ is the union of all circles containing $e$, we will focus only on $B$. If $B$ consists of a single circle, it is trivially true that all edges of $B$ are batteries. Thus, for the rest of this section we assume that the underlying graph of $\Sigma$ is a block $B$ containing $e$ such that $B$ is neither an isthmus, nor a circle.

\subsection{Layering} In this section we introduce the structure of the underlying block $B$ that is necessary for $e$ to be a battery. 

First, we need some terminology to help describe the chords of a circle $C$. We say that chords $c_1$ and $c_2$ of $C$ \emph{cross} if $C \cup c_1 \cup c_2$ is a subdivision of $K_4$. Chords that do not cross are \emph{noncrossing}.  Let $\mathcal{E}$ be the set of all vertices of $C$ that are endpoints of some chord of $C$. The vertices of $\mathcal{E}$ partition $C$ into paths, called \emph{segments}. A segment of $C$ whose endpoints have a chord between them is called a \emph{handle}. We say that $C$ is \emph{$2$-handled} if any two of its chords are noncrossing and it contains exactly two handles. Thus, the property of being $2$-handled is a stronger version of having noncrossing chords (noncrossing chords may create many handles). If $C$ is $2$-handled, edges $e$ and $f$ contained in separate handles are said to be \emph{separated}. 

We will now define the appropriate structure on $B$, and then explain how to reduce this structure to a $2$-handled circle. Let $C$ be a circle of $B$, and suppose every bridge of $C$ has exactly two vertices of attachment. The graph obtained by replacing each of these bridges with a single chord between its vertices of attachment is denoted $C^*$. If $C^*$ is $2$-handled, we say that $B$ is \emph{$C$-layered}. See Figure \ref{f1} for an example of a $C$-layered graph. 

\begin{figure}[h!]
\centering
\includegraphics[scale=.20]{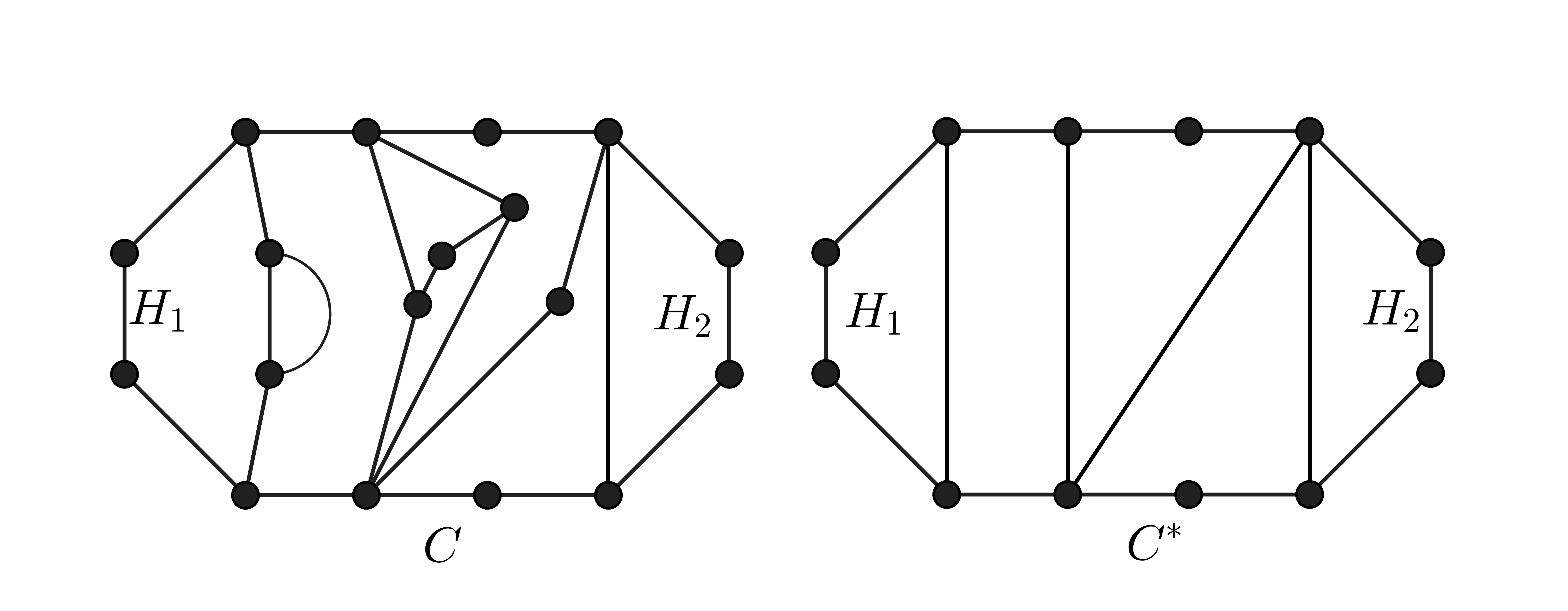}
\caption{On the left, we have a $C$-layered graph. On the right, we have replaced all bridges of $C$ with chords to obtain the $2$-handled circle $C^*$. In each picture, the handles are labeled $H_1$ and $H_2$, while the other $4$ segments of $C$ are unlabeled. \label{f1}}
\end{figure} 

Assuming that each bridge of $C$ has two vertices of attachment, it is extremely convenient to use $C^*$ to deduce things about $B$. For example, suppose $C^*$ has a theta graph containing chords $c_1$, $c_2$, and $c_3$, and suppose $c_1$, $c_2$, and $c_3$ correspond to the bridges $D_1$, $D_2$, and $D_3$. Then, we can deduce that $B$ contains at least one theta graph whose three constituent paths go through $D_1$,$D_2$, and $D_3$ (this occurs in Figure \ref{f1}---pick any three chords). The following lemma makes this example concrete.

\begin{lem}\label{subd} Let $H$ be a subgraph of $C^*$. Then, $B$ contains a subdivision of $H$. 
\begin{proof} To obtain the subdivision of $H$ in $B$, first take $H \cap C$. Then, for each chord in $H$, take a path through the corresponding bridge. 

\end{proof}
\end{lem}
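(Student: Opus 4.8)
The plan is to build the subdivision piece by piece, exploiting the fact that $V(C^*)=V(C)$ and that every edge of $C^*$ is either an edge of $C$ or a representative chord standing in for a unique bridge of $C$. Since $H$ is a subgraph of $C^*$, its edge set splits into two types: edges lying on $C$, and representative chords. I would leave the edges of the first type untouched, and expand each chord of the second type into a path through the corresponding bridge.

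Concretely, I would first take $H\cap C$; this is already a subgraph of $B$, and it contains every vertex of $H$ (because $V(H)\subseteq V(C^*)=V(C)$) together with exactly those edges of $H$ that lie on $C$. Next, for each chord $c{:}uv$ of $H$, let $D$ be the bridge of $C$ that $c$ represents; by hypothesis the vertices of attachment of $D$ are exactly $u$ and $v$. If $D$ is itself a chord, it is already an edge of $B$ joining $u$ and $v$, and I add it. Otherwise $D$ is a connected component of $G\setminus V(C)$ together with its connecting edges; since $D$ is connected and contains both $u$ and $v$, it contains a $u$--$v$ path $P_D$, i.e.\ a path through $D$, whose interior vertices lie in $V(D)\setminus V(C)$, and I add $P_D$.

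The remaining step is to verify that the union of $H\cap C$ with all of these paths really is a subdivision of $H$, i.e.\ that the added paths are internally disjoint from one another and meet $H\cap C$ only at their endpoints. This is where the structure of bridges does the work: a vertex off $C$ belongs to at most one bridge, so for distinct bridges $D_1,D_2$ the sets $V(D_1)\setminus V(C)$ and $V(D_2)\setminus V(C)$ are disjoint; hence the interiors of $P_{D_1}$ and $P_{D_2}$ are disjoint, and each such interior avoids $V(C)\supseteq V(H\cap C)$. Thus each chord of $H$ has been replaced by an internally disjoint path with the same endpoints while the on-$C$ part of $H$ is preserved, which is exactly the definition of a subdivision of $H$. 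I do not expect a real obstacle here; the only points needing a line of care are the existence of a path through a non-chord bridge (immediate from connectedness in the definition of a bridge) and the internal-disjointness bookkeeping above — and, as a minor edge case, the situation where two distinct bridges share the same attachment pair, so that $C^*$ has parallel chords, which the argument handles without change since the two replacement paths are still internally disjoint.
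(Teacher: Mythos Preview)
Your argument is correct and follows essentially the same approach as the paper: take $H\cap C$ and replace each chord of $H$ by a path through its corresponding bridge. You simply flesh out the disjointness verification that the paper leaves implicit.
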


We will frequently make use of Lemma \ref{subd} by finding configurations of circles in $C^*$ and then pulling them up to $B$.

\subsection{Negative and Positive Batteries}

We are now ready to describe when a certain fixed edge is a battery. As a reminder, we assume that $\Sigma=(B, \sigma)$ is a signed graph such that $B$ is a block that is not an isthmus, nor circle. Also, $e$ is an edge of $\Sigma$, and $C$ is a circle containing $e$. Our main theorem concerning edge batteries is as follows.

\begin{thm}\label{battery1} The edge $e$ is a battery if and only if $B$ is $C$-layered, $e$ is contained in a handle of $C$, and either:

\begin{enumerate}
\item The other handle of $C$ contains a balancing edge, in which case $[e,C,-]$ is a negative battery.
\item Every path through a bridge of $C$ makes a negative circle with either half of $C$, in which case $[e,C,+]$ is a positive battery.
\end{enumerate}
\end{thm}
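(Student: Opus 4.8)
Throughout I use the \emph{theta trick}: for any two circles $Z_1,Z_2$, the quantity $\mathrm{sign}(Z_1)\mathrm{sign}(Z_2)$ equals the product of the signs of the edges in $Z_1\Delta Z_2$; in particular, if $Z_1$ and $Z_2$ meet in a path with at least one edge, then $Z_1\Delta Z_2$ is a circle and $\mathrm{sign}(Z_1)\mathrm{sign}(Z_2)\mathrm{sign}(Z_1\Delta Z_2)=+$ (the theta property, since each edge lies in exactly two of the three circles). For the forward direction, assume $e$ is a battery, let $C$ be the unique signed circle through $e$ that realizes this, and let $\epsilon$ be its sign; then every circle through $e$ other than $C$ has sign $-\epsilon$. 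My first goal would be to show $B$ contains no subdivision of $K_4$ including $C$. The elementary input is that in a signed subdivision of $K_4$ every edge lies in exactly four circles --- two triangles and two quadrilaterals through it --- and the product of their four signs is $+$, so an even number of them are negative; hence among the circles through a fixed edge one can never have a \emph{unique} one of a given sign. If $B$ had a $K_4$-subdivision $K\supseteq C$, then $e$ lies on a subdivided edge of $K$, the four circles of $K$ through $e$ split $4$--$0$ or $2$--$2$ in sign, one of them is $C$ (sign $\epsilon$), so a second circle of sign $\epsilon$ passes through $e$, a contradiction. From this: (i) a bridge of $C$ with nonempty interior and $\ge 3$ attachments contains a Steiner tree joining three of them whose branch vertex is interior, and $C$ together with that tree is a $K_4$-subdivision containing $C$; so every bridge has $\le 2$ attachments, hence exactly two since $B$ is $2$-connected, and $C^*$ is defined. (ii) Two crossing chords of $C^*$, realized by paths through their bridges, form with $C$ a $K_4$-subdivision containing $C$; so the chords of $C^*$ are noncrossing. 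Since $B\ne C$ there is at least one chord, and noncrossing chords always create at least two handles.

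Next I would show $C^*$ has exactly two handles, with $e$ in one of them. If not, there are two handles $H,H'$ with $e\notin H\cup H'$; let $D,D'$ be their bridges. Replacing the segment $H$ by a path $\pi$ through $D$ gives a circle $(C\setminus H)\cup\pi$ through $e$, unequal to $C$, hence of sign $-\epsilon$; it meets $C$ in the path $C\setminus H$, so the theta property forces the handle circle $\delta:=H\cup\pi$ to have sign $-$, and likewise $\delta':=H'\cup\pi'$ has sign $-$. But replacing \emph{both} $H$ and $H'$ gives a circle $C'$ through $e$ with $C\Delta C'$ the edge-disjoint union of $\delta$ and $\delta'$, so $\mathrm{sign}(C)\mathrm{sign}(C')=\mathrm{sign}(\delta)\mathrm{sign}(\delta')=+$, i.e.\ $C'$ has sign $\epsilon$; as $C'\ne C$ this contradicts that $e$ is a battery. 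So $B$ is $C$-layered, with handles $H_1\ni e$ and $H_2$. Because $C^*$ has noncrossing chords and exactly two handles it is outerplanar with $C$ as outer face and inner faces arranged in a path $G_1,F_0,\dots,F_r,G_2$, where $G_1=H_1\cup(\text{chord of }H_1)$ and $G_2=H_2\cup(\text{chord of }H_2)$; its circles are precisely the boundaries of the intervals of this face-path, and such a circle passes through $e$ iff its interval contains $G_1$. Lifting through the bridges via Lemma~\ref{subd}, every circle of $B$ through $e$ is $C$, or has the form $A\cup\pi$ with $\pi$ a path through a bridge $D$ and $A$ the arc of $C$ between the attachments of $D$ that contains $H_1$. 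Since $e$ is a battery, all these $A\cup\pi$ have sign $-\epsilon$, and applying the theta property to the paths $\pi,A,C\setminus A$ forces $(C\setminus A)\cup\pi$ to have sign $-$ for every bridge and every bridge-path.

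Now the cases separate. If $\epsilon=+$, then $A\cup\pi$ and $(C\setminus A)\cup\pi$ are both negative for every bridge-path $\pi$, which is exactly condition~(2); and the theta property on $\pi,A,C\setminus A$ with two negatives forces $C$ positive, so this is case~(2). If $\epsilon=-$, tracking the face-path signs shows every inner face of $C^*$ is positive except $G_2=H_2\cup(\text{chord})$; consequently, for a fixed edge $b$ of $H_2$, every circle of $B$ avoiding $b$ is positive --- it either passes through $e$ (already covered) or, by the face-path description, is the theta-sum of two circles through $e$, which are positive here --- while $C$ is a negative circle through $b$; hence $b$ is a balancing edge of $B$ lying in $H_2$, which is case~(1) with $C$ negative. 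For the converse I reuse the same enumeration. In case~(1), $C\ni b$ so $C$ is negative; any negative circle through $e$ contains the balancing edge $b$, hence contains all of $H_1$ and all of $H_2$ (the interior vertices of a handle have degree two in $B$), hence --- being the boundary of a face-interval containing $G_1$ and $G_2$ --- equals $C$. In case~(2), the theta property first gives that $C$ is positive; after switching so $C$ is all-positive (Lemma~\ref{sw2}), condition~(2) says every bridge-path is negative, so every circle through $e$ other than $C$, being $A\cup\pi$ with $A$ a positive arc of $C$ and $\pi$ a negative bridge-path, is negative; thus $C$ is the unique positive circle through $e$.

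The step I expect to be hardest is the circle enumeration: establishing the outerplanar face-path structure of a $2$-handled circle and, crucially, checking it lifts faithfully to $B$ even though a bridge may contain many internally disjoint paths between its attachments, so that the $B$-circles through $e$ match the $C^*$-circles through $e$ exactly as claimed --- this is where Lemma~\ref{subd} and a careful case analysis are needed. A secondary difficulty is the last move of the $\epsilon=-$ case, upgrading ``$G_2$ is the unique negative inner face of $C^*$'' to ``$b$ is an honest balancing edge of $B$'', which forces one to handle circles of $B$ that avoid $b$, use two distinct bridges, and miss $e$, via the symmetric-difference identity for signs.
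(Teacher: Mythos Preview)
Your structural argument is sound and in one place slicker than the paper's. You share with the paper the $K_4$-parity observation (Lemma~\ref{tomk4}) to rule out bridges with three attachments and crossing chords. Where the paper then runs two separate configuration arguments --- one to show $e$ lies in a handle (Figure~\ref{f2}, via two adjacent chords at the endpoints of $e$'s segment) and another to show there are at most two handles (Figure~\ref{f3}, via three handle-chords) --- you collapse both into a single step: take any two handles $H,H'$ with $e\notin H\cup H'$, replace each by a bridge-path, and use the sign identity on $C\Delta C'=\delta\cup\delta'$ to produce a second $\epsilon$-circle through $e$. Your explicit outerplanar face-path enumeration of circles through $e$ is also more careful than the paper, which dismisses the converse direction as ``easy''.

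There is, however, a real gap in your $\epsilon=-$ case. To certify that $b\in H_2$ is a balancing edge you must show \emph{every} circle of $B$ avoiding $b$ is positive, and your case split --- ``passes through $e$'' or ``is the theta-sum of two circles through $e$'' --- omits circles lying entirely inside a single bridge $D$. Such a circle is not a symmetric difference of circles through $e$, and your face-path description of $C^*$ says nothing about the interior of a bridge. (You flag circles that ``use two distinct bridges and miss $e$'' as the secondary difficulty, but it is the internal bridge circles that are actually missing.) The gap is repairable along your lines: your own analysis shows every path through a fixed bridge $D$ has the same sign, and a rerouting argument as in Lemma~\ref{balbridge} then forces $D$ to be balanced. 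The paper sidesteps all of this with a cleaner move: delete the handle $H_2$; the result $\Sigma\setminus H_2$ is still a block and still contains $e$, but now $e$ lies in no negative circle, so by Harary's theorem the block $\Sigma\setminus H_2$ is balanced, and one switching (Lemma~\ref{sw2}) makes every edge outside $H_2$ positive --- placing the balancing edge in $H_2$ without ever enumerating circles.
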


First, we need a lemma. The author thanks Thomas Zaslavsky for suggesting this lemma, which helps simply our proof method.

\begin{lem}\label{tomk4} Every edge in a signed subdivision of $K_4$ lies in an even number of negative circles and an even number of positive circles.

\begin{proof}  Let $\Sigma$ be a signed subdivision of $K_4$, and let $e$ be an edge of $\Sigma$. Then, $e$ is contained in exactly four circles, denoted $C_1, C_2, C_3$, and $C_4$. The labeling is chosen so that $C_1 \Delta C_2 = C_3 \Delta C_4$. If $C_1 \Delta C_2$ is a positive circle, then $C_1$ and $C_2$ must have the same sign, as well as $C_3$ and $C_4$ (here we are using the theta property). If $C_1 \Delta C_2$ is a negative circle, then $C_1$ and $C_2$ have different signs, as do $C_3$ and $C_4$. In either case, there are an even number of both positive and negative circles containing $e$. 
\end{proof}
\end{lem}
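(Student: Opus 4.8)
The plan is to exploit the rigidity of the cycle structure of $K_4$, reducing everything to a finite check on $K_4$ plus a one-line sign computation. Write $\Sigma=(G,\sigma)$ with $G$ a subdivision of $K_4$ and fix an edge $e$. I would first record three facts about $K_4$, each verified by inspection (the seven nonzero elements of the $3$-dimensional cycle space of $K_4$ being exactly the four triangles and the three quadrilaterals): $K_4$ has exactly seven circles; each edge lies in exactly four of them, namely two triangles and two quadrilaterals; and the symmetric difference of any two circles is again a circle. Since subdividing an edge replaces it by a path whose interior vertices have degree $2$, a circle either traverses such a path completely or avoids it completely, so the circles of $G$ are precisely the subdivisions of the circles of $K_4$, and this correspondence is compatible with symmetric difference and with edge-containment. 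Consequently all three facts hold in $G$: the edge $e$ lies in exactly four circles $C_1, C_2, C_3, C_4$; there are exactly three circles of $G$ not containing $e$; and every $C_i \Delta C_j$ with $i \ne j$ is a circle not containing $e$.

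Next I would use a pairing observation. There are six pairs $\{i,j\}$ but only three circles of $G$ avoiding $e$, and the map $\{i,j\}\mapsto C_i \Delta C_j$ is two-to-one, its fibers being the three ways of splitting $\{1,2,3,4\}$ into two blocks of size two (equivalently, $C_1 \Delta C_2 \Delta C_3 \Delta C_4 = \emptyset$, so that $C_a \Delta C_b = C_c \Delta C_d$ for every such split). After relabelling I may therefore assume $C_1 \Delta C_2 = C_3 \Delta C_4 =: D$.

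The sign part is then immediate. For any two edge sets $A$ and $B$ one has $\mathrm{sign}(A \Delta B) = \mathrm{sign}(A)\,\mathrm{sign}(B)$, since the edges of $A \cap B$ are counted twice; applying this to the two pairs gives $\mathrm{sign}(C_1)\mathrm{sign}(C_2) = \mathrm{sign}(D) = \mathrm{sign}(C_3)\mathrm{sign}(C_4)$. If $D$ is positive, then $C_1, C_2$ have the same sign and $C_3, C_4$ have the same sign, so among $C_1, C_2, C_3, C_4$ the number of negative circles is $0$, $2$, or $4$; if $D$ is negative, then exactly one of $C_1, C_2$ and exactly one of $C_3, C_4$ is negative, so there are exactly two negative circles. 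In either case $e$ lies in an even number of negative circles, and hence --- there being four circles through $e$ in total --- in an even number of positive circles as well. I would also remark that the two implications above can be read off the theta property directly: any two of the $C_i$ meet in a path through $e$, so their union is a theta graph whose third circle is their symmetric difference.

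The part I expect to require the most care is purely structural: verifying on $K_4$ that there are seven circles, that each edge lies in four of them, that symmetric differences of circles are circles, and that the four circles through a fixed edge admit the pairing $C_1 \Delta C_2 = C_3 \Delta C_4$ --- and then checking that each of these survives an arbitrary subdivision. Once that picture is secured, the rest is just the displayed sign identity and the two-case check.
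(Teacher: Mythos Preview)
Your proof is correct and follows essentially the same route as the paper's: identify the four circles through $e$, pair them via $C_1 \Delta C_2 = C_3 \Delta C_4$, and read off the parity from the sign of this common symmetric difference. You simply spell out the structural facts about $K_4$ and the subdivision more explicitly and replace the appeal to the theta property by the equivalent identity $\mathrm{sign}(A\Delta B)=\mathrm{sign}(A)\,\mathrm{sign}(B)$, which is fine.
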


\begin{proof}[Proof of Theorem \ref{battery1}] The reverse direction of the proof is easy. If $B$ is $C$-layered and $e$ is contained in a handle, then clearly either of the two signature properties listed imply that $e$ is a negative or positive battery respectively. 

It remains to show the forward direction. Suppose $e$ is a battery contained in the unique signed circle $C$. If there exists some bridge $D$ of $C$ that has three or more vertices of attachment, then $D \cup C$ contains a subdivision of $K_4$ (containing all of $C$). By Lemma \ref{tomk4}, $e$ is contained in at least two circles that have the same sign as $C$ in this subdivision---impossible by assumption that $e$ is a battery in $C$. Thus, each bridge of $C$ must have precisely $2$ vertices of attachment (they can't have $1$, since $G$ is a block).

Replace each bridge of $C$ with a single chord between its vertices of attachment to form $C^*$. We will find configurations of circles in $C^*$ and repeatedly use Lemma \ref{subd} without mention to pull them up to $B$. If $C$ has two chords $c_1$ and $c_2$ that are crossing, then $C \cup c_1 \cup c_2$ is a subdivision of $K_4$ containing $C$, once again impossible by Lemma \ref{tomk4}. So we assume that $C^*$ is a circle with noncrossing chords. Now we want to prove that $C$ has exactly two handles and that $e$ must be in one of them. 

First, suppose that $e$ is contained in a segment of $C$ that is not a handle. Thus, the segment of $C$ containing $e$ meets two different chords at its endpoints, $c_1$ and $c_2$. Consider $C \cup c_1 \cup c_2$. This graph has six circles, denoted $C, C_1, C_2, C_3, C_4$, and $C_5$. We choose the labeling so that $e \in C,C_3,C_4,C_5$ and $C_1 \Delta C_2 \Delta C_3 = C$ and $C_2 \Delta C_3 = C_4$, and $C_1 \Delta C_3 = C_5$. This situation is illustrated in Figure \ref{f2}. If $C$ is negative, then $C_3$, $C_4$, and $C_5$ must be positive (they contain $e$). Since $C_3 \Delta C_5 = C_1$ and $C_4 \Delta C_5 =C_2$, both $C_1$ and $C_2$ are positive. This implies that $C=C_1 \Delta C_2 \Delta C_3$ is positive, a contradiction. Similarly, if $C$ is positive, then $C_3$, $C_4$ and $C_5$ are negative. This implies that $C_1$ and $C_2$ are positive, but then $C_1 \Delta C_4 =C$ implies that $C$ is negative, a contradiction. Thus, this case is impossible---$e$ must be contained in a handle.

\begin{figure}[h!]
\centering
\includegraphics[scale=1]{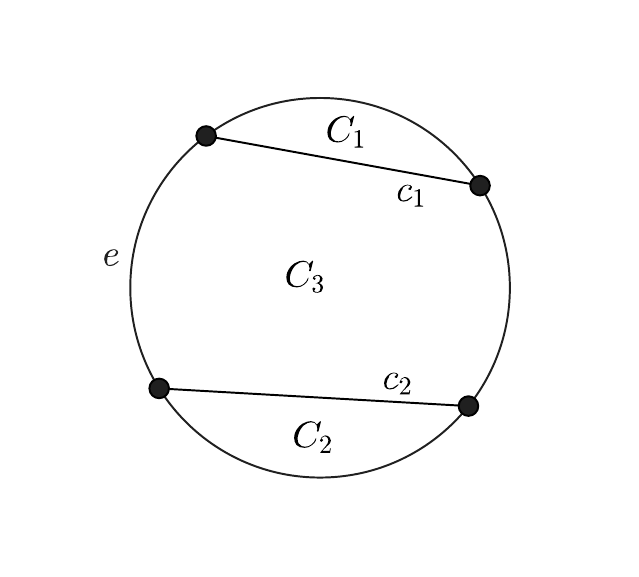}
\caption{The edge $e$ is not contained in a handle of $C$. The circles $C_2 \Delta C_3 = C_4$, and $C_1 \Delta C_3 = C_5$ are not labeled. \label{f2}}
\end{figure} 

Now, we have $C^*$ with non-crossing chords and we know $e$ is in a handle of $C$. All that remains is to show that there are exactly two handles. Suppose there are (at least) three handles, and let their corresponding chords be $c_1$, $c_2$, and $c_3$. Each handle along with its chord forms a circle, and we denote these $C_1$, $C_2$, and $C_3$, respectively. We choose labeling so that $e \in C_1$, and we write $C_4:= C \Delta C_1 \Delta C_2 \Delta C_3$. This situation is illustrated in Figure \ref{f3}. If $C$ is negative, then $C_1$ and $C_1 \Delta C_4$ and $C_1 \Delta C_4 \Delta C_3$ are positive, which means $C_3$ (and similarly $C_2$) are positive. However, $(C_1 \Delta C_4 \Delta C_3) \Delta C_2 = C$ which implies that $C$ is positive, a contradiction. Similarly, if $C$ is positive then $C_1 \Delta C_4 \Delta C_3$ is negative and $C_2$ and $C_3$ must again be positive. Therefore, the equality $(C_1 \Delta C_4 \Delta C_3) \Delta C_2 = C$ contradicts the fact that $C$ is positive. Consequently, there must be no more than two handles present. Note that under the assumption that $B$ is not a single circle, less than two handles are also impossible.

\begin{figure}[h!]
\centering
\includegraphics[scale=1]{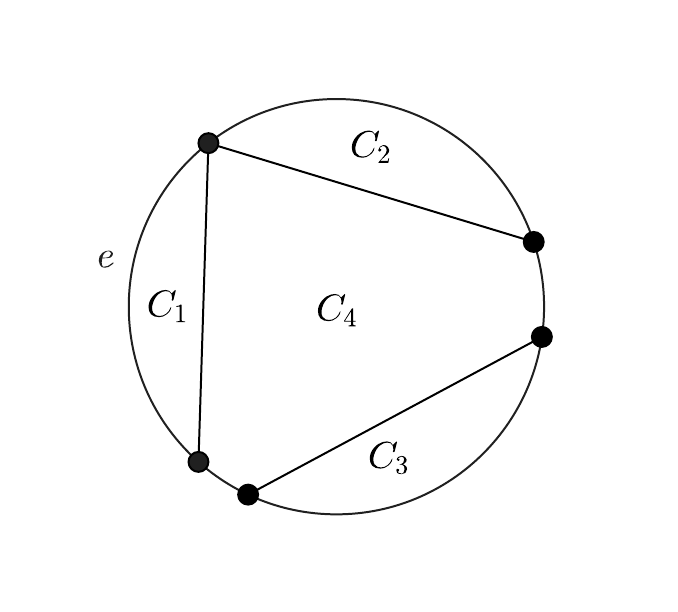}
\caption{The edge $e$ is contained in a handle, and there are two other handles. \label{f3}}
\end{figure}

Now that we have confirmed the structure of $B$, we will examine the signature of $\Sigma$. We break this into two cases. First, suppose $C$ is negative (so that $e$ is a negative battery). Delete the handle of $C$ not containing $e$ (denoted $H$), destroying the only negative circle containing $e$. The resulting $\Sigma {\setminus} H$ is still a block (clearly deleting a handle from a $C$-layered graph with at least one bridge does not create a cutpoint), and hence balanced (it contains $e$). Thus, we can switch $\Sigma$ so that the only negative edge is contained in $H$ (Lemma \ref{sw2}). This is the balancing edge described in the theorem.

If $C$ is positive, we observe that every path through a bridge of $C$ between its vertices of attachment makes a negative circle with either half of $C$. To see this, it is convenient to switch $\Sigma$ so that $C$ is all-positive. Then, any path through a bridge of $C$ between vertices of attachment must have a negative sign, otherwise this path along with $C$ forms an all-positive theta graph containing $e$ (and hence two positive circles containing $e$). 
\end{proof}

Consider a negative battery $[e,C,-]$ separated from a balancing edge $b$, and let $H$ be the handle containing $b$. Clearly, every edge in $H$ is a balancing edge. Moreover, no edge outside of $H$ is a balancing edge. This is because $G {\setminus} H$ is a block, and thus $e$ is contained in a circle besides $C$ with any given edge in $G{\setminus}H$. Thus we call $H$ the \emph{balancing handle}, since it contains all balancing edges of $\Sigma$. The presence of a single negative battery fixes $H$, and as a consequence, we are able to describe the location of all other negative batteries at the same time. 

\begin{thm}\label{allbat} Suppose $\Sigma$ has a negative battery $[e,C,-]$ separated from a balancing handle $H$. Let $f$ be an edge contained in $\Sigma{\setminus}C$ and let $D$ be the bridge of $C$ containing $f$. Then $f$ is a negative battery if and only if there is exactly one path through $D$ that contains $f$. 
\begin{proof} If $f$ is a negative battery there must only be one path through $D$ containing it, since each path through $D$ can be extended to a circle containing $H$. Conversely, if there is exactly one path through $D$ that contains $f$, this path extends uniquely to a circle containing $H$. 

\end{proof}

\end{thm}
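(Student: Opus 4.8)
The plan is to reduce the assertion to a count of the negative circles through $f$, using the structure that Theorem~\ref{battery1} has already pinned down. Since $[e,C,-]$ is a negative battery, $B$ is $C$-layered; write $x,y$ for the two vertices of attachment of $D$. Because $H$ is the balancing handle, every edge of $H$ is a balancing edge of $\Sigma$, so a circle of $\Sigma$ is negative if and only if it meets $H$; and since the interior vertices of the segment $H$ have degree $2$, meeting $H$ is the same as containing all of $H$. Now any circle $\Gamma$ through $f$ meets $D$ in a single path through $D$ containing $f$: $\Gamma$ can pass between $D$ and the rest of $B$ only at $x$ or $y$, and if $\Gamma$ lay inside $D$, or met $D$ in two paths, then $\Gamma$ would be a circle avoiding $H$, hence positive. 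Writing such a $\Gamma$ as $P\cup Q$ with $P=\Gamma\cap D$ a path through $D$ containing $f$ and $Q$ a path from $x$ to $y$ internally disjoint from $D$, we obtain: the negative circles through $f$ are precisely the graphs $P\cup Q$ of this form with $H\subseteq Q$. (Here $P$ and $Q$ meet only in $x,y$, so $P\cup Q$ really is a circle, and as $H\subseteq C$ shares no edge with $P$, the circle is negative exactly when $H\subseteq Q$.)

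I then record a standard extension. Since $x,y$ are the endpoints of the chord $c_D$, they are not interior vertices of the handle $H$, so exactly one of the two arcs of $C$ joining $x$ and $y$ contains $H$; call it $A$. For every path $P$ through $D$ with $f\in P$, the circle $P\cup A$ is negative and contains $f$. The forward direction follows immediately: if $f$ is a negative battery and $P_1\neq P_2$ were two paths through $D$ containing $f$, then $P_1\cup A$ and $P_2\cup A$ would be two distinct negative circles through $f$; since $f$ lies on at least one path through $D$, it lies on exactly one.

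For the converse, assume $P$ is the unique path through $D$ containing $f$. By the first paragraph every negative circle through $f$ equals $P\cup Q$ for some path $Q$ from $x$ to $y$ that contains $H$ and is internally disjoint from $D$, so it is enough to show that $A$ is the only such $Q$; then $P\cup A$ is the unique negative circle through $f$. Contracting the portion of $Q$ lying inside each bridge of $C$ to the corresponding chord turns $Q$ into a path from $x$ to $y$ in $C^*$ that avoids $c_D$ and still contains $H$; since $A$ uses no chord of $C^*$, it suffices to show that $A$ is the unique path from $x$ to $y$ in $C^*-c_D$ containing $H$ (then the contracted $Q$ equals $A$, so $Q$ met no bridge, so $Q=A$). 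The mechanism is this. The arcs $A$ and $A'$ of $C$ meet only in $x$ and $y$, which are the endpoints of the path, so the path uses no edge of $A'$ and no chord incident to a vertex of $A'\setminus\{x,y\}$. Because the chords are noncrossing, every remaining chord has both ends on $A$; for such a chord the sub-arc of $A$ between its ends either contains all of $H$ (the interior of the segment $H$ carries no chord endpoint), in which case a path using that chord cannot also contain $H$, or it misses $H$ entirely; but a chord of the latter type, together with the chords nested beneath it, would force a handle of $C^*$ contained in that sub-arc of $A$---a handle disjoint from $H$, hence a third handle---contradicting that $C^*$ is $2$-handled. Hence the path cannot leave $A$, so it equals $A$.

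The step I expect to be the real obstacle is exactly this last structural claim: that $A$ is the unique path from $x$ to $y$ through $H$ in $C^*-c_D$. Noncrossing chords alone do not force it---with a third handle one could detour near it while still retaining $H$---so the argument must genuinely use ``exactly two handles.'' I would make it precise by induction on the number of chords of $C^*$, at each step deleting a handle of $C^*$ other than $H$ (equivalently, an ear of the noncrossing chord diagram) and checking that any path from $x$ to $y$ through $H$ either absorbs that handle's arc or cannot enter it; the base case, where $c_D$ is the only chord, is immediate since there $C^*-c_D=C$. Everything else---the reduction through the balancing handle, the correspondence between negative circles through $f$ and the pairs $(P,Q)$, and the existence of the extension $P\cup A$---is routine given Theorem~\ref{battery1}.
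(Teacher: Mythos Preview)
Your proof is correct and follows the same approach as the paper's: both directions rest on the fact that the negative circles through $f$ are exactly those containing $H$, and that each path $P$ through $D$ containing $f$ extends to such a circle via the arc $A$ of $C$ through $H$. The paper simply asserts that this extension is unique, while you actually prove it from the $2$-handled structure of $C^*$; your inductive argument (peeling off the outermost chord on the $A$-side) is sound and is exactly the kind of justification the paper's one-line converse leaves implicit.
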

\begin{figure}[h!]
\centering
\includegraphics[scale=.35]{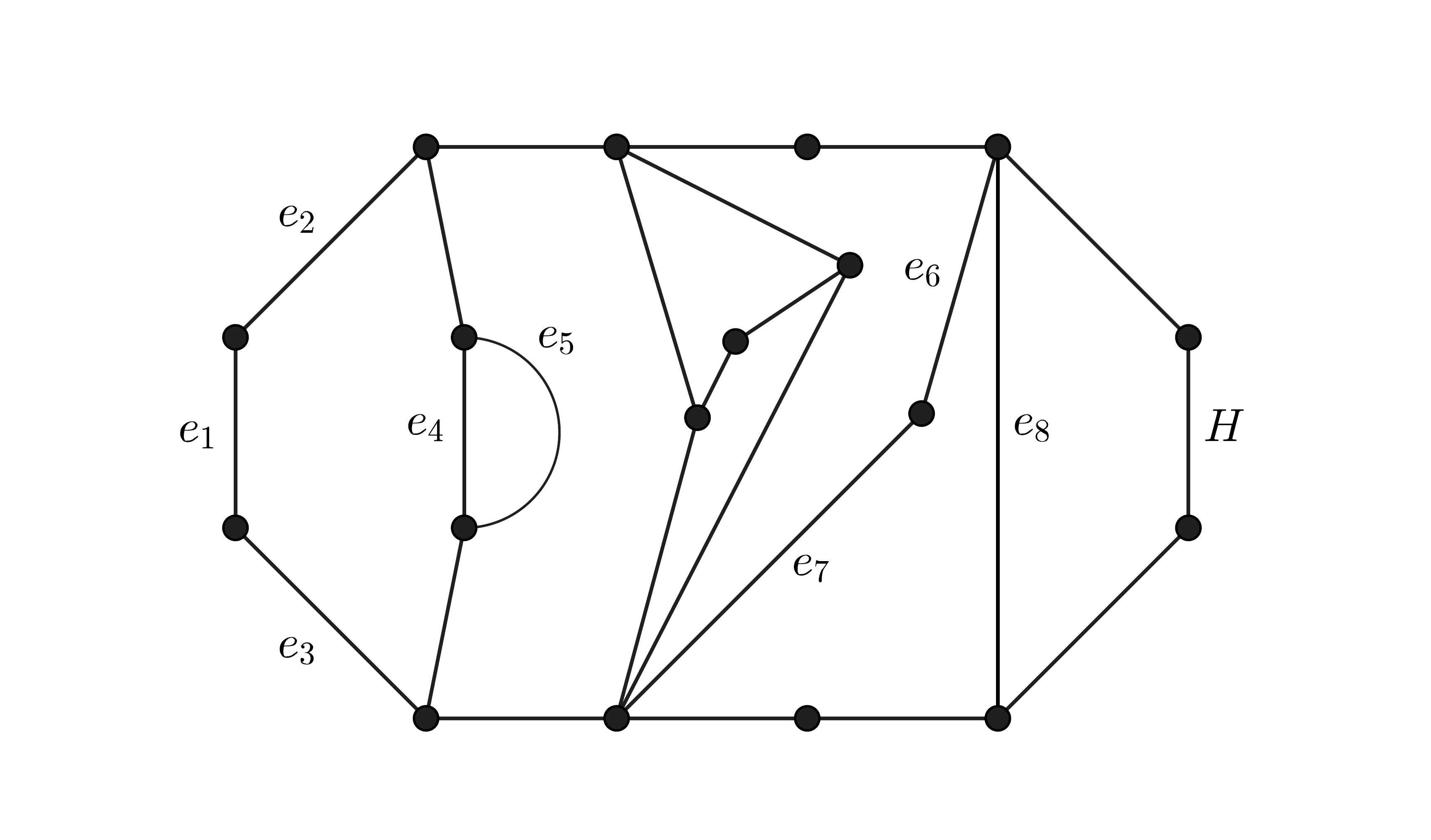}
\caption{Here, $H$ is the balancing handle. The batteries are labeled $e_1,...,e_8$. Each battery outside of $C$ is contained in a unique path through the bridge containing it. Notice that $G$ is $C$-layered with respect to any circle $C$ that contains a negative battery. \label{f4}}
\end{figure} 

Theorem \ref{allbat} is illustrated in Figure \ref{f4}. As seen in the figure, the presence of a negative battery allows for the presence of many other negative batteries. Interestingly, the presence of a positive battery severely restricts the possible locations of other positive batteries.

As an easy application of Theorem \ref{battery1} we notice that if $[e,C,+]$ is a positive battery, every edge contained in either handle of $C$ is a positive battery, and no other edge of $C$ is a positive battery. We will now study edges that are contained in $\Sigma{\setminus}C$.

\begin{lem}\label{balbridge} If $[e,C,+]$ is a positive battery and $D$ is a bridge of $C$, then $D$ is balanced. 

\begin{proof} Suppose $A$ is a negative circle contained in $D$. Let $P_0$ be a path through $D$ that intersects $A$ in a path $A_0$ with at least one edge. Then, define $A_1$ so that $A = A_0 \cup A_1$. Since $A$ is negative, $A_0$ and $A_1$ have opposite sign. Define $P_1$ to be the path obtained by replacing $A_0$ with $A_1$ in $P_0$. Then, $P_0$ and $P_1$ have opposite sign---impossible by Theorem \ref{battery1}. 

\end{proof}

\end{lem}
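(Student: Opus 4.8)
\textbf{Proof proposal for Lemma~\ref{balbridge}.}

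The plan is to argue by contradiction. Suppose some bridge $D$ of $C$ contains a negative circle $A$; I will manufacture from $A$ two paths through $D$ of opposite sign, which is impossible. Indeed, since $[e,C,+]$ is a positive battery, Theorem~\ref{battery1} tells us $B$ is $C$-layered (so $D$ has exactly two vertices of attachment $u$ and $v$, and ``a path through $D$'' means a simple $u$--$v$ path inside $D$) and that condition (2) holds: every path through a bridge of $C$ makes a negative circle with each half of $C$. After switching (Lemma~\ref{sw2}) so that $C$ carries the all-positive signature, each half of $C$ is positive, so this says: \emph{every} path through \emph{every} bridge of $C$ is itself negative. Two paths through $D$ of opposite sign therefore contradict this, and that is the target contradiction.

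The swap is the mechanism that turns the negative circle $A$ into such a pair. Suppose we have a $u$--$v$ path $P_0$ through $D$ whose intersection with $A$ is exactly one sub-arc $A_0$ with at least one edge; let $A_1$ be the complementary arc of $A$ (also with an edge, since $A_0$, being a sub-path of the circle $A$, is proper) and let $x,y$ be the two common endpoints of $A_0$ and $A_1$. Then $P_0$ decomposes as $L\cdot A_0\cdot R$ with $L$ a $u$--$x$ path and $R$ a $y$--$v$ path, and because $P_0\cap A=A_0$ the pieces $L$ and $R$ meet $A$ only in $\{x\}$ and $\{y\}$ respectively; hence $P_1:=L\cdot A_1\cdot R$ is again a genuine $u$--$v$ path through $D$. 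Multiplying edge signs, $\sigma(P_0)\sigma(P_1)=\sigma(A_0)\sigma(A_1)=\sigma(A)=-$, so $P_0$ and $P_1$ have opposite sign, contradicting the previous paragraph. This finishes the argument modulo the existence of $P_0$.

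The step I expect to be the real obstacle is exactly producing a path $P_0$ through $D$ that meets $A$ in a single arc. When $u$ and $v$ both lie on $A$ there is nothing to do: the two arcs of $A$ between $u$ and $v$ are already two $u$--$v$ paths through $D$, and their signs multiply to $\sigma(A)=-$, so the contradiction is immediate without any swap. In general I would lean on the fact that $B$ is a block (hence has no cutpoint): route a path from $u$ into $V(A)$ and one from $v$ into $V(A)$, arranged to be disjoint and to reach $A$ at distinct vertices $x\ne y$, and splice them with an arc of $A$ from $x$ to $y$ to get $P_0$ with $P_0\cap A$ equal to that arc. The only way this routing can fail is if a single vertex of $D$ separates the interior of $A$ from $u$ (or from $v$), and such a vertex would be a cutpoint of $B$, contradicting the block hypothesis. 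I would carry out that last point with a short Menger/fan-lemma argument; I expect it to be the only place in the proof that needs genuine care.
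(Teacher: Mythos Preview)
Your argument is correct and matches the paper's proof exactly: take a path $P_0$ through $D$ meeting the putative negative circle $A$ in a single arc $A_0$, swap for the complementary arc $A_1$ to obtain a path $P_1$ of opposite sign, and contradict Theorem~\ref{battery1}. You are in fact more careful than the paper, which simply asserts the existence of such a $P_0$ without comment; the Menger/fan step you flag is indeed the only subtlety (a clean way to run it is to note that $D$ together with an auxiliary edge $uv$ is $2$-connected, since any cutvertex of it would be a cutvertex of $B$, and then take two disjoint paths from $\{u,v\}$ to $V(A)$ there).
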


\begin{thm}\label{3bridge} Suppose $[e,C,+]$ is a positive battery. If $C$ has 3 or more bridges, no edge outside of $C$ is a positive battery.

\begin{proof} Let $f$ be an edge of $\Sigma$ not contained in $C$. Since $[e,C,+]$ is a positive battery, $\Sigma$ is $C$-layered and hence all bridges of $C$ have two vertices of attachment and are pairwise non-crossing. Let $D_1, D_2$, and $D_3$ be three bridges of $C$ with labeling chosen so that $f \in D_1$. For convenience, we switch $\Sigma$ so that $C$ is all-positive. By Theorem \ref{battery1}, performing this switch makes all paths through $D_1, D_2$, and $D_3$ negative. Let $P_1$ be such a path through $D_1$ that contains $f$, and let $P_2$ and $P_3$ be paths through $D_2$ and $D_3$ respectively. Then, $P_1$ and $P_2$, together with two (possibly trivial) paths through $C$ between the vertices of attachment of $D_1$ and $D_2$, form a positive circle containing $f$. Similarly, $P_1$ and $P_3$ are contained in a positive circle containing $f$. Thus, we have found two positive circles containing $f$---it is not a positive battery.
\end{proof}
\end{thm}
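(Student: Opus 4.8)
The plan is to show directly that every edge $f$ of $\Sigma$ not lying on $C$ belongs to two distinct positive circles, so that $f$ cannot be a positive battery.

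Since $[e,C,+]$ is a positive battery, Theorem \ref{battery1} gives that $B$ is $C$-layered; in particular every bridge of $C$ has exactly two vertices of attachment and any two bridges are noncrossing. Because $C$ is positive, first I would switch $\Sigma$ so that every edge of $C$ is positive. By part (2) of Theorem \ref{battery1}, every path through a bridge of $C$ makes a negative circle with either half of $C$; since each such half is now a path of positive edges, this forces every path through every bridge of $C$ to be negative.

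Now fix $f$, lying in some bridge $D_1$, and use the hypothesis of three or more bridges to pick two further bridges $D_2$ and $D_3$. I would choose a path $P_1$ through $D_1$ containing $f$ --- every edge of a two-attachment bridge lies on such a path, since otherwise that edge would be an isthmus of the bridge cutting off a piece containing no vertex of attachment, which would make a cutpoint of $B$ --- together with paths $P_2$ through $D_2$ and $P_3$ through $D_3$; by the previous paragraph $P_1$, $P_2$, and $P_3$ are all negative. Because $D_1$ and $D_2$ are noncrossing, their pairs of attachment vertices do not interleave around $C$, so there are two (possibly trivial) arcs $Q$ and $Q'$ of $C$ for which $C_{12} := P_1 \cup Q \cup P_2 \cup Q'$ is a circle; its sign is the product of two negative paths ($P_1$ and $P_2$) and two paths of positive edges, hence positive, and it contains $f$. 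The same construction applied to $D_1$ and $D_3$ yields a positive circle $C_{13}$ containing $f$.

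Finally I would observe that $C_{12} \neq C_{13}$: bridges and $C$ are pairwise edge-disjoint, so every edge of $C_{13}$ lies in $D_1 \cup D_3 \cup C$, and in particular $C_{13}$ contains no edge of $D_2$, whereas $C_{12} \supseteq P_2$ does. Hence $f$ lies in two distinct positive circles and is not a positive battery. The only points that need a little care --- and which the author evidently regards as routine --- are the two structural facts used along the way: that $f$ actually lies on some path through its own bridge (the cutpoint argument above), and that two noncrossing two-attachment bridges can always be spliced through $C$ into a single circle containing the chosen path, including the degenerate cases in which two attachment vertices coincide or an arc of $C$ degenerates to a single vertex. Neither is a genuine obstacle; each just requires unwinding the definitions of \emph{bridge} and \emph{noncrossing}.
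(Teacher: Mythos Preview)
Your proof is correct and follows essentially the same approach as the paper's: switch so that $C$ is all-positive, invoke Theorem~\ref{battery1} to make every bridge-path negative, and then splice $P_1$ with $P_2$ and with $P_3$ through arcs of $C$ to produce two positive circles containing $f$. You go further than the paper in explicitly justifying that $f$ lies on some path through its bridge, that noncrossing bridges can be spliced into a genuine circle, and that $C_{12}\neq C_{13}$; the paper leaves all three of these as evident.
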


If $C$ has less than $3$ bridges  it is possible to have positive batteries outside of $C$, though specific types of bridges are required.

\begin{thm}\label{2bridge} Suppose $[e,C,+]$ is a positive battery and $C$ has exactly $2$ bridges. If both bridges are paths, every edge contained in either bridge is a positive battery. Otherwise, no edge in either bridge is a positive battery.

\begin{proof} Let $D_1$ and $D_2$ be the bridges of $C$. Once again we switch so that $C$ is all-positive. If $D_1$ and $D_2$ are paths, then they are negative paths by Theorem \ref{battery1}. In this case, $D_1$ and $D_2$ along with the (possibly trivial) paths in $C$ between their vertices of attachment form a positive circle. The fact that this is the only positive circle containing $D_1$ and $D_2$ is a matter of inspection.

Now, suppose that one of the bridges, say $D_1$, is not a path. Since we are working within a block, $D_1$ must contain a circle $A$. By Lemma \ref{balbridge}, $A$ is a positive circle. As in the proof of Lemma \ref{balbridge}, let $P_0$ be a path through $D_1$ that intersects $A$ in a path $A_0$ that contains at least one edge. Define $A_1$ so that $A = A_0 \cup A_1$. Let $P_1$ be the path obtained by replacing $A_0$ with $A_1$ in $P_0$. Let $P_2$ be a path through $D_2$. Then, $P_0, P_1$, and $P_2$ along with the appropriate paths through $C$ form a theta graph with three positive circles, forcing every edge in $P_0, P_1$ and $P_2$ to be in at least $2$ positive circles. Clearly for any edge $f$ outside of $C$, a choice of $P_0,P_1$, and $P_2$ can be found containing $f$.
\end{proof}
\end{thm}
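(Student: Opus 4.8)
The plan is to split into the two cases already flagged in the statement: both bridges are paths, versus at least one bridge is not a path. Throughout I would switch $\Sigma$ so that $C$ is all-positive, so that (by Theorem \ref{battery1}) every path through $D_1$ and every path through $D_2$ is negative. Write $a_1,b_1$ for the vertices of attachment of $D_1$ and $a_2,b_2$ for those of $D_2$; since $C$ has exactly two bridges and $C^*$ is $2$-handled, the chords $c_1,c_2$ replacing $D_1,D_2$ are the two handle-chords, so the four points $a_1,b_1,a_2,b_2$ cut $C$ into at most four arcs, two of which are the handles (one containing $e$) and the other two being the (possibly trivial) arcs $Q,Q'$ joining an endpoint of $D_1$ to an endpoint of $D_2$.

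For the first case, suppose $D_1$ and $D_2$ are both paths; then they are negative paths. The circle $P_1 \cup Q \cup P_2 \cup Q'$ (with $P_i = D_i$) is a product of two negatives and two all-positive arcs, hence positive, and it contains every edge of $D_1 \cup D_2$. To see this is the \emph{only} positive circle through any edge of $D_1$ or $D_2$: any circle meeting $D_1$ must, since $D_1$ is an induced path with only its two endpoints on the rest of $B$, contain all of $D_1$; likewise it either contains all of $D_2$ or avoids $D_2$ entirely. A circle containing $D_1$ but not $D_2$ lies in $D_1 \cup C$, i.e.\ in $C^* $ minus $c_2$, which is just a single circle $C_1$ (the handle of $D_1$ closed up by $c_1$, pulled up through $D_1$) --- but that circle is $D_1$ together with a handle of $C$, and since $D_1$ is negative and the handle is all-positive, $C_1$ is negative. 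So the only positive circle through an edge of $D_1$ (or symmetrically $D_2$) is the one exhibited, proving those edges are positive batteries.

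For the second case, suppose $D_1$ is not a path. Working in a block, $D_1$ contains a circle $A$, which is positive by Lemma \ref{balbridge}. Following the device in the proof of Lemma \ref{balbridge}, pick a path $P_0$ through $D_1$ meeting $A$ in a subpath $A_0$ with at least one edge, write $A = A_0 \cup A_1$, and let $P_1$ be $P_0$ with $A_0$ swapped for $A_1$; then $P_0,P_1$ are two distinct negative paths through $D_1$ with the same endpoints. Take any path $P_2$ through $D_2$. Then $P_0, P_1, P_2$ together with the arcs of $C$ joining the attachment vertices of $D_1$ to those of $D_2$ form a signed theta graph: its three circles are $P_0 \cup Q \cup P_2 \cup Q'$, $P_1 \cup Q \cup P_2 \cup Q'$ (both positive, each a product of two negatives and positive arcs) and $P_0 \cup P_1$ (positive, product of two negatives). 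By the theta property all three are positive, so every edge of $P_0 \cup P_1$ lies in at least two positive circles. Finally I would note that any prescribed edge $f$ of $D_1$ can be placed on such a $P_0$ (and, by the analogous swap, the theta construction still applies), and if instead $f \in D_2$, then $f$ lies on the negative path $P_2$ while $P_0 \cup Q \cup P_2 \cup Q'$ and $P_1 \cup Q \cup P_2 \cup Q'$ are two distinct positive circles through $f$. Hence no edge of either bridge is a positive battery. The main obstacle is the bookkeeping in the first case --- rigorously arguing that an edge of $D_1$ forces the whole of $D_1$ into any circle through it (using that $D_1$ is an \emph{induced} path, a chord-free bridge) and then enumerating exactly which circles of $C^*$ survive --- rather than anything in the theta-graph sign computations.
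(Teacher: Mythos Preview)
Your approach is essentially identical to the paper's: switch so $C$ is all-positive, in the path case exhibit the single positive circle $D_1\cup Q\cup D_2\cup Q'$ and check all other circles through $D_i$ are negative, and in the non-path case use the $P_0/P_1$ rerouting trick from Lemma~\ref{balbridge} together with a path $P_2$ through $D_2$ to build an all-positive theta graph. The paper simply writes ``a matter of inspection'' where you spell out the first case, and its second case is line-for-line the same construction.

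One small miscount in your first case: a circle containing $D_1$ but avoiding $D_2$ lives in the theta graph $D_1\cup C$, which has \emph{two} circles through $D_1$ (one with each arc of $C$ between $a_1$ and $b_1$), not one. This does not damage the argument, since both arcs are all-positive and $D_1$ is negative, so both such circles are negative; but you should say so rather than asserting there is ``just a single circle $C_1$.''
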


Finally, we investigate the case where $C$ has only one bridge.

\begin{thm} Suppose $[e,C,+]$ is a positive battery and $C$ has exactly $1$ bridge $D$. An edge $f$ of $D$ is a positive battery if and only if it is contained in a single circle in $D$.

\begin{proof} Since $D$ is balanced (Lemma \ref{balbridge}), any positive battery contained in $D$ is contained in a single circle in $D$. In the other direction, suppose $f$ is contained in a single (positive) circle in $D$. Then, any other circle containing $f$ must consist of a path through $D$ and a path through $C$. This will form a negative circle by Theorem \ref{battery1}. 
\end{proof}
\end{thm}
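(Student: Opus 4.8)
The plan is to pin down exactly which circles of $B$ can pass through a given edge $f$ of $D$, and to observe that such a circle is positive precisely when it lies entirely inside $D$. First I would extract the structure guaranteed by Theorem~\ref{battery1}: since $[e,C,+]$ is a positive battery, $B$ is $C$-layered, so the single bridge $D$ has exactly two vertices of attachment, say $u$ and $v$, which cut $C$ into its two halves (the two handles) $C_1$ and $C_2$. Because $D$ is the only bridge of $C$ we have $B = C \cup D$, and $C$ and $D$ share no edges (their only common vertices are $u$ and $v$). Finally, Lemma~\ref{balbridge} tells us $D$ is balanced, so every circle of $B$ that is a subgraph of $D$ is positive.

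The heart of the argument is the classification step. Let $A$ be a circle of $B$ containing $f$. If $A$ is not a subgraph of $D$, then it has edges in $C$ as well as edges in $D$. Traversing $A$, its edges fall into maximal runs lying alternately in $C$ and in $D$, and each vertex of $A$ at which two consecutive runs meet is incident with one $C$-edge and one $D$-edge of $A$, hence lies in $V(C)\cap V(D)=\{u,v\}$. A vertex of the circle $A$ occurs only once along $A$ and has degree $2$ there, so each of $u,v$ can serve as such a meeting vertex at most once; thus $A$ has at most two runs, and since it meets both $C$ and $D$ it has exactly one run in $C$ and one in $D$, with $u$ and $v$ as the meeting vertices. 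Consequently $A$ is the union of a $u$--$v$ path in $C$ (necessarily $C_1$ or $C_2$) and a $u$--$v$ path in $D$, i.e. a path through $D$. By the positive-battery clause of Theorem~\ref{battery1}, every path through a bridge of $C$ makes a negative circle with either half of $C$, so $A$ is negative. Hence the positive circles of $B$ through $f$ are precisely the circles of $B$ through $f$ that are subgraphs of $D$, all of which are positive since $D$ is balanced.

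Both directions then drop out. If $f$ is a positive battery it lies in exactly one positive circle, which by the classification must be a subgraph of $D$, and it is then the unique circle of $D$ through $f$. Conversely, if $f$ lies in exactly one circle of $D$, that circle is positive, every other circle of $B$ through $f$ is negative, and so $f$ is a positive battery. I expect the only point needing care to be the classification step, where it is essential that $D$ is the sole bridge of $C$ (so there is no route between $u$ and $v$ other than around $C$ or through $D$); I would also flag the degenerate possibility that $f$ lies in no circle of $D$ at all (for instance when $D$ is a path, or $f$ is an isthmus of $D$), in which case $f$ lies only in negative circles and is correctly excluded from being a positive battery.
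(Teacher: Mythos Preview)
Your proof is correct and follows essentially the same approach as the paper's: classify circles through $f$ as either lying entirely in $D$ (positive, since $D$ is balanced by Lemma~\ref{balbridge}) or as a path through $D$ together with a half of $C$ (negative, by Theorem~\ref{battery1}). You simply supply more detail in the classification step and flag the degenerate case where $f$ lies in no circle of $D$, whereas the paper states the classification in a single sentence.
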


\section{Vertex Batteries}\label{sec3}  Now let us examine a similar question to the one discussed in the previous section---when does a vertex of $\Sigma$ lie in a unique signed circle $C$? We will call such a vertex a \emph{vertex battery}, written $[v,C,-]$ or $[v,C,+]$ for the negative and positive varieties, respectively. We cannot assume that $G$ is a block, since a vertex, unlike an edge, may be contained in many blocks at once (if the vertex is a cutpoint). However, an acyclic block is of no interest to us. Thus, for the rest of this section we assume that every block containing $v$ contains a circle. 

First, we will study the problem of determining if $v$ is a negative vertex battery. It is clear that if $v$ is a negative vertex battery, it is contained in one and only one unbalanced block $B$. Moreover, there is a restriction on $\deg_B(v)$, the degree of $v$ in $B$. 

\begin{lem}\label{vertdeg} Let $B$ be an unbalanced block of $\Sigma$. If $\deg_B(v) \geq 3$, then $v$ is not a negative vertex battery.
\begin{proof} Certainly $v$ is contained in some negative circle $C$ in $B$. Thus, consider the two edges incident to $v$, say $e_1$ and $e_2$, that are contained in $C$. There is a third edge $e_3 \in B$ incident with $v$, and therefore there must be some theta graph $\Theta \subseteq B$ such that $C \subset \Theta$ and also $e_3 \in \Theta$. Therefore, $v$ is contained in all three circles of $\Theta$. Since $C$ is negative, $\Theta$ contains an additional negative circle which contains $v$. 
\end{proof}
\end{lem}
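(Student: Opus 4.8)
The plan is to produce two distinct negative circles through $v$. The first ingredient is that $v$ itself lies in some negative circle $C$ of $B$: since $\deg_B(v)\ge 3$, choose an edge $e$ incident with $v$; as $B$ is an unbalanced block, Harary's observation (Lemma~\ref{negativebalance}) says $e$ lies in a negative circle, and since $e$ is incident with $v$ that circle necessarily passes through $v$. Call it $C$, and let $e_1,e_2$ be the two edges of $C$ at $v$.

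Because $\deg_B(v)\ge 3$, there is an edge $e_3$ at $v$ with $e_3\notin E(C)$; write $w$ for its other endpoint. The central step is to build a theta graph $\Theta\subseteq B$ containing $C$ in which $v$ is one of the two degree-three vertices. Here I would use that $B$, being a block with more than one edge, has no cutpoint, so $B-v$ is connected; hence there is a path in $B-v$ from $w$ to $V(C)\setminus\{v\}$, and choosing a minimal such path $P$ makes it internally disjoint from $C$ and meeting $C$ in a single vertex $u\neq v$. Since $P$ avoids $v$ it also avoids $e_3$, so $e_3$ followed by $P$ is a $v$--$u$ path internally disjoint from $C$. Adjoining it to the two $v$--$u$ arcs of $C$ gives a theta graph $\Theta$, all three of whose circles pass through $v$, one of them being $C$.

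To conclude, invoke the theta property: a signed theta graph has $1$ or $3$ positive circles, i.e.\ $2$ or $0$ negative ones; since $C\subseteq\Theta$ is negative, $\Theta$ must contain exactly two negative circles. The second negative circle of $\Theta$ contains $v$ and differs from $C$, so $v$ lies in at least two negative circles and is not a negative vertex battery.

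I expect the only real obstacle to be the middle paragraph---extracting the theta graph---since that is where the block hypothesis actually gets used; the point requiring care is that the returning path $P$ can be taken internally disjoint from $C$ (and automatically disjoint from $e_3$), so that $v$ genuinely has degree three in $\Theta$ and $C$ survives as one of its three circles. The existence of a negative circle through $v$ and the final count are immediate from results already available.
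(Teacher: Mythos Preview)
Your proof is correct and follows essentially the same approach as the paper's: find a negative circle $C$ through $v$, extend it via a third edge at $v$ to a theta graph in which $v$ has degree three, and invoke the theta property. You supply more detail than the paper does (an explicit construction of $\Theta$ via the connectivity of $B-v$, and a justification that $v$ lies in a negative circle), though note that the fact you need for the latter is Harary's observation cited just after Lemma~\ref{sw2}, not Lemma~\ref{negativebalance}.
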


Assuming $v$ is a negative vertex battery, let $B$ be the single unbalanced block containing $v$, and let $C$ be the negative circle of $B$ containing $v$. By Lemma \ref{vertdeg}, $\deg_B(v)=2$, so let $v_1$ and $v_2$ be the two neighbors of $v$ in $B$. We can \emph{suppress} $v$---that is, replace the edges $v_1v$ and $v_2v$ with the single edge $e_v{:}v_1v_2$, setting $\sigma(e_v)=\sigma(v_1v)\sigma(v_2v)$. We write the result of the suppression as $B {\downarrow} v$. Thus, there is a bijective correspondence between the signed circles of $B$ and the signed circles of $B {\downarrow} v$. Therefore, we have the following theorem, which reduces the vertex problem to the edge problem.
\begin{thm}\label{uniquecharvtx} Let $\Sigma$ be a signed graph, and let $v$ be a vertex of $\Sigma$. Then $[v,C,-]$ is a negative vertex battery if and only if
\begin{enumerate}
\item The block $B$ of $\Sigma$ containing $v$ and $C$ is the only unbalanced block containing $v$, and $\deg_B(v)=2$.
\item $[e_v,C{\downarrow}v,-]$ is a negative edge battery in $B {\downarrow} v$. 
\end{enumerate}
\end{thm}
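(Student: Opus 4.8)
The plan is to prove both directions by exploiting the circle-preserving bijection induced by suppressing a degree-two vertex, and by isolating the reasons why a negative vertex battery forces the structural conditions in item (1). For the forward direction, I would argue as follows. If $[v,C,-]$ is a negative vertex battery, then in particular $v$ lies in some negative circle, so at least one block containing $v$ is unbalanced; call one such block $B$ with $C \subseteq B$. If $v$ were contained in a second unbalanced block $B'$, then $B'$ contains a negative circle $C'$ through $v$ (by Harary's fact quoted in the excerpt, every edge of an unbalanced block lies in a negative circle, and $v$ has degree $\geq 2$ in $B'$, so some such circle passes through $v$), and $C' \neq C$ since they lie in different blocks and share only the vertex $v$ — contradicting uniqueness. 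Hence $B$ is the only unbalanced block containing $v$. Then Lemma \ref{vertdeg} forces $\deg_B(v) = 2$, establishing item (1).

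Still in the forward direction, with $\deg_B(v)=2$ I would invoke the suppression operation: let $v_1, v_2$ be the neighbors of $v$ in $B$ and form $B{\downarrow}v$ by replacing the path $v_1,v,v_2$ with a single edge $e_v$ carrying the product sign. As noted just before the theorem statement, suppression gives a sign-preserving bijection between circles of $B$ through $v$ and circles of $B{\downarrow}v$ through $e_v$ (and it is the identity on all other circles, none of which pass through $v$ or $e_v$). Under this bijection, $C$ corresponds to $C{\downarrow}v$. Since $C$ is the unique negative circle of $\Sigma$ through $v$, every negative circle of $\Sigma$ through $v$ equals $C$; transporting across the bijection, every negative circle of $B{\downarrow}v$ through $e_v$ equals $C{\downarrow}v$, and since $B{\downarrow}v$ is the only unbalanced block of the suppressed graph containing $e_v$, this says $[e_v, C{\downarrow}v, -]$ is a negative edge battery in $B{\downarrow}v$. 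Here one should also check $B{\downarrow}v$ is indeed a block (suppressing a degree-two vertex does not create a cutpoint) and is not an isthmus or a single circle is not required — the edge-battery definition handles the degenerate cases — so item (2) holds.

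For the reverse direction I would simply run the same bijection backwards. Assume (1) and (2). By (2), $C{\downarrow}v$ is the only negative circle of $B{\downarrow}v$ through $e_v$; pulling back along the sign-preserving bijection, $C$ is the only negative circle of $B$ through $v$. Any negative circle of $\Sigma$ through $v$ lies in some block containing $v$, and by (1) the only such block that is unbalanced — hence the only one that can contain a negative circle at all — is $B$. Therefore $C$ is the unique negative circle of $\Sigma$ through $v$, i.e. $[v,C,-]$ is a negative vertex battery.

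The main obstacle, and the only place requiring care, is the forward-direction claim that $v$ lies in at most one unbalanced block: one must make sure that an unbalanced block containing $v$ really does contain a negative circle \emph{through $v$}, not merely somewhere in the block. This follows because $\deg_{B'}(v) \geq 2$ for any block $B'$ (a vertex in a block that is not an isthmus meets at least two of its edges), so $v$ lies on a circle of $B'$, and then a theta-graph / symmetric-difference argument — exactly the mechanism used in Lemma \ref{vertdeg} — upgrades this to a negative circle through $v$ whenever $B'$ is unbalanced. Everything else is bookkeeping about the suppression bijection, which the paper has already set up.
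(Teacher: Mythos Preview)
Your proposal is correct and follows essentially the same route as the paper: the paper's argument (given in the paragraph immediately preceding the theorem rather than in a separate proof environment) likewise isolates the unique unbalanced block, invokes Lemma~\ref{vertdeg} to get $\deg_B(v)=2$, and then appeals to the sign-preserving bijection between circles of $B$ and circles of $B{\downarrow}v$ induced by suppression. You are in fact more explicit than the paper on two points the paper leaves tacit---why an unbalanced block containing $v$ must contain a negative circle \emph{through} $v$ (your use of Harary's edge result is the cleanest justification), and the precise form of the suppression bijection---but these are elaborations of the same argument, not a different approach.
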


We will now turn our attention to the problem of determining if a given vertex is a positive vertex battery. Our characterization of positive vertex batteries is analagous to our characterization of negative vertex batteries, though slightly more complicated. Just as a negative vertex battery may be contained in many blocks in which it is contained in only positive circles (balanced), but only one block in which it is contained a negative circle, a positive vertex battery may be contained in many blocks in which it is contained in only negative circles, but only one block in which it is contained in a positive circle. 

We will first describe when an edge is contained in only negative circles and then convert this to a description of vertices. 
\begin{lem}\label{negativebalance} An edge of $\Sigma$ is contained in only negative circles if and only if it is a balancing edge in the block containing it.

\begin{proof} Let $e$ be an edge of $\Sigma$ and let $B$ be the block contaning it. First, if $e$ is a balancing edge in $B$, then $e$ is clearly contained in only negative circles (switch so that $e$ is the only negative edge). 

In the other direction, suppose that $e$ is contained only in negative circles. Consider $B{\setminus}e$. Suppose there is a negative circle $C$ contained in $B{\setminus}e$. Then, since $B$ is a block, there is a theta graph containing both $C$ and $e$, forcing $e$ to be in a positive circle---impossible. Thus, $B{\setminus}e$ is balanced and can hence be switched to all-positive by a switching function $\zeta$. However, applying $\zeta$ to $B$ instead will leave $e$ as a negative edge, since $e$ is contained in only negative circles. Thus, $e$ is a balancing edge in $B$. 
\end{proof}
\end{lem}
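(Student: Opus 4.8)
The plan is to handle the two implications separately; the reverse implication is essentially immediate and the forward implication carries the content. Throughout, let $B$ denote the block containing $e$, and note that every circle of $\Sigma$ meeting $e$ is contained in $B$, so the statement is really about $B$. I take the convention that $e$ lies in at least one circle---otherwise the left-hand side is vacuous---which matches the standing assumptions in the surrounding text.

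\textbf{Balancing edge $\Rightarrow$ only negative circles.} If $e$ is a balancing edge of $B$, switch $B$ (using the definition of balancing edge together with Lemma~\ref{sw2}) so that $e$ is the unique negative edge. Then every circle of $B$ through $e$ contains exactly one negative edge and hence is negative, so $e$ lies only in negative circles.

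\textbf{Only negative circles $\Rightarrow$ balancing edge.} Suppose $e$ lies only in negative circles. The key step is to show that $B{\setminus}e$ is balanced. If not, let $A$ be a negative circle in $B{\setminus}e$. Since $B$ is a block containing a circle through $e$, it is $2$-connected, so a Menger-type argument produces a theta subgraph $\Theta\subseteq B$ containing both $A$ and $e$: take two internally disjoint paths from the two endpoints of $e$ to $A$ and glue them along an arc of $A$, degenerating appropriately if an endpoint of $e$ already lies on $A$ or if $e$ is a chord of $A$. Now $e$ lies in two of the three circles of $\Theta$, and the third circle is $A$, which is negative; by the theta property exactly one of the two circles of $\Theta$ through $e$ is then positive, contradicting the hypothesis. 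Hence $B{\setminus}e$ is balanced. By Lemma~\ref{sw2} there is a switching function $\zeta$ making $B{\setminus}e$ all-positive; apply the same $\zeta$ to all of $B$, so that every edge other than $e$ becomes positive. If $e$ were also positive after switching, $B^{\zeta}$ would be all-positive, hence balanced, hence contain no negative circle through $e$---impossible, since switching preserves the sign of every circle and $e$ lies in a negative circle. Therefore $e$ is the unique negative edge of $B^{\zeta}$, i.e.\ $e$ is a balancing edge of $B$.

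I expect the only nontrivial point to be the extraction of the theta subgraph $\Theta$ from $2$-connectivity, together with a clean treatment of its degenerate cases (an endpoint of $e$ lying on $A$, or $e$ being a chord of $A$); the rest is routine bookkeeping with switching, Lemma~\ref{sw2}, and the theta property. This argument is the natural dual of Harary's fact that $e$ lies in \emph{some} negative circle iff its block is unbalanced.
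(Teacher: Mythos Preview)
Your proof is correct and follows essentially the same route as the paper's: both directions are handled identically, the forward direction proceeds by showing $B{\setminus}e$ is balanced via a theta-graph contradiction, then switches $B{\setminus}e$ to all-positive and observes that $e$ must remain negative. You supply a bit more detail on extracting the theta graph from $2$-connectivity and on why $e$ stays negative after switching, but the argument is the same.
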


We can now characterize the type of block in which $v$ is contained only in negative circles.

\begin{lem}\label{vertonlyneg} A vertex $v$ is contained only in negative circles in $B$ if and only if $v$ is degree $2$ in $B$, and $e_v$ is a balancing edge in $B{\downarrow}v$.

\begin{proof} If $v$ is degree $2$ in $B$ and $e_v$ is a balancing edge, then clearly $v$ is contained in only negative circles by Lemma \ref{negativebalance}.

Conversely, if $v$ is contained in only negative circles, then $B$ is not balanced. If $\deg_B(v)\geq 3$, then $v$ is contained in a negative circle $C$ and furthermore $v$ is a degree $3$ vertex in a theta graph containing $C$. Hence, $v$ is contained in a positive circle---impossible. Thus, $\deg_B(v)=2$, and we can suppress $v$ to obtain the result.

\end{proof}
\end{lem}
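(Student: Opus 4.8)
The plan is to handle the two implications separately, and in both to lean on the sign-preserving bijection between circles of $B$ through $v$ and circles of $B{\downarrow}v$ through $e_v$, together with Lemma \ref{negativebalance}. Recall that when $\deg_B(v)=2$, suppression replaces the two edges at $v$ by $e_v$ with $\sigma(e_v)=\sigma(v_1v)\sigma(v_2v)$, so a circle of $B$ that uses both edges at $v$ becomes a circle of $B{\downarrow}v$ using $e_v$ of the same sign, and conversely; circles of $B$ avoiding $v$ are unchanged. Spelling out this correspondence cleanly is the only piece of setup worth stating.

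For the ``if'' direction, assume $\deg_B(v)=2$ and that $e_v$ is a balancing edge in $B{\downarrow}v$. By Lemma \ref{negativebalance}, $e_v$ lies only in negative circles of $B{\downarrow}v$. Every circle of $B$ through $v$ corresponds under the suppression bijection to a circle of $B{\downarrow}v$ through $e_v$ of the same sign, hence is negative, which is the claim.

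For the converse, assume $v$ lies only in negative circles of $B$; in particular $B$ is unbalanced, and since $B$ is a block containing a circle, $v$ lies on at least one circle $C$, which is then negative. I first rule out $\deg_B(v)\ge 3$ exactly as in the proof of Lemma \ref{vertdeg}: take the two edges of $C$ at $v$ and a third edge $e_3$ at $v$; because $B$ is a block there is a theta graph $\Theta\subseteq B$ with $C\subset\Theta$ and $e_3\in\Theta$, and $v$ lies on all three circles of $\Theta$. Since $C$ is negative, the theta property forces one of the other two circles of $\Theta$ to be positive, contradicting that $v$ lies only in negative circles. Hence $\deg_B(v)=2$, and I suppress $v$. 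Running the bijection in reverse, $e_v$ lies only in negative circles of $B{\downarrow}v$, so Lemma \ref{negativebalance} identifies $e_v$ as a balancing edge in the block of $B{\downarrow}v$ containing it, which is all of $B{\downarrow}v$.

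The only points needing a sentence of care are bookkeeping: that suppressing a degree-$2$ vertex of a block leaves a graph whose block containing $e_v$ is the whole graph (so ``balancing edge in $B{\downarrow}v$'' is meaningful, even in degenerate cases where $B{\downarrow}v$ collapses to a digon or a single loop), and the existence of the theta graph $\Theta$ in the high-degree case, which is just $2$-connectivity of $B$ and is reused verbatim from Lemma \ref{vertdeg}. I do not expect a genuine obstacle here: the lemma is essentially a repackaging of Lemma \ref{negativebalance} through the suppression correspondence, so the real content is simply stating that correspondence correctly.
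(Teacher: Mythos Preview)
Your proposal is correct and follows essentially the same route as the paper: the ``if'' direction is Lemma~\ref{negativebalance} pushed through the suppression bijection, and the converse rules out $\deg_B(v)\ge 3$ via the theta-graph argument of Lemma~\ref{vertdeg} before suppressing and invoking Lemma~\ref{negativebalance} again. The only difference is that you spell out the sign-preserving circle bijection and the block bookkeeping explicitly, whereas the paper leaves these implicit.
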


Now we will describe the blocks in which $v$ is contained in exactly one positive circle. This along with Lemma \ref{vertonlyneg} will complete the description of positive vertex batteries. 

\begin{lem}\label{vertonepos} A vertex $v$ is contained in a unique positive circle $C$ in a block $B$ if and only if either:
\begin{enumerate}
\item $B=C$ is a balanced circle.
\item $B$ is unbalanced, $\deg_B(v)=2$, $B{\downarrow}v$ is $C$-layered, and $e_v$ is a positive edge battery in $B{\downarrow}v$.
\item $B$ is unbalanced, $\deg_B(v)=3$, $B$ is $C$-layered with exactly one bridge, $v$ is a vertex of attachment of the bridge, and all edges of $C$ are positive edge batteries.

\end{enumerate}

\begin{proof} If $B$ and $v$ satisfy either of the first two conditions above, then clearly $v$ is contained in a unique positive circle in $B$. If $B$ and $v$ satisfy the third condition, observe that any circle besides $C$ that contains $v$ must consist of a path through the single bridge and a handle of $C$---a negative circle. 

In the other direction, suppose $v$ is contained in a unique positive circle in $B$. If $B$ is balanced, it consists of only a single positive circle or else $v$ is contained in multiple positive circles. 

So, suppose that $B$ is unbalanced and consider the case where $\deg_B(v) \geq 4$. Let $e_1,...,e_4$ be four edges incident with $v$, so that $e_1$ and $e_2$ are contained in $C$. Since $B$ is a block, we can find a theta graph $\Theta$ that contains $C$, and $e_3$ (up to choice of labels). Furthermore, there is a path $P$ containing $e_4$ starting at $v$ and  ending at a vertex of $\Theta$ (besides $v$). It is easy to see that $\Theta \cup P$ contains at least two positive circles that contain $v$. 

Now we consider the case where $B$ is unbalanced and $\deg_B(v)=3$. Find a theta graph $\Theta \subseteq B$ that contains $C$ such that $\deg_{\Theta}(v)=3$. Observe that $\Theta$ is $C$-layered, with a single bridge $P$ consisting of a path. There can be no path connecting an internal vertex of $P$ to an internal vertex of a handle of $C$, or else we create a subdivision of $K_4$, forcing $v$ to be in more than one positive circle. There can be no path besides $P$ connecting two vertices of $C$, or else there is a second positive circle containing $v$. Thus, any path besides $P$ connecting two vertices of $\Theta$ must have both its endpoints in $P$ and be otherwise disjoint from $\Theta$. Since $\deg_{\Theta}(v)=3$, no such path will contain $v$. The union of all such paths forms a single bridge of $C$, of which $v$ is a vertex of attachment.
\end{proof}
\end{lem}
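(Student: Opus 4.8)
The plan is to prove both implications, handling the reverse (``if'') direction quickly and concentrating on the forward direction. For the reverse direction, configuration~(1) is immediate since a circle contains only one circle. For configuration~(2), I would invoke the sign-preserving bijection between the circles of $B$ and those of $B{\downarrow}v$ recorded just before Theorem~\ref{uniquecharvtx}: since $\deg_B(v)=2$, a circle of $B$ passes through $v$ iff the corresponding circle of $B{\downarrow}v$ passes through $e_v$, so ``$e_v$ is a positive edge battery'' immediately gives ``$v$ lies in a unique positive circle.'' For configuration~(3), observe that $v$ has exactly two edges on $C$ and one edge into the single bridge $D$, so any circle through $v$ other than $C$ must use that bridge edge, hence runs from $v$ through $D$ to the other vertex of attachment and back along a handle of $C$; applying Theorem~\ref{battery1} to an edge of $C$ (which by hypothesis is a positive battery) shows every path through $D$ makes a negative circle with either half of $C$, so every such circle is negative.

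For the forward direction, suppose $v$ lies in a unique positive circle $C$ of $B$. If $B$ is balanced, then every circle of $B$ is positive, so if $B\neq C$ we pick an edge $f\in E(B){\setminus}E(C)$ and use $2$-connectivity of $B$ to route a circle through both $v$ and $f$, a second positive circle through $v$ — contradiction; hence $B=C$, which is configuration~(1). So assume $B$ is unbalanced; the first job is to bound $\deg_B(v)$. To exclude $\deg_B(v)\ge 4$, take the two edges of $v$ on $C$ together with two further edges $e_3,e_4$ at $v$, route a path from $v$ along $e_3$ back to $C$ to form a theta $\Theta=C\cup P_3$ with $\deg_\Theta(v)=3$, and then a path $P_4$ from $v$ along $e_4$ back to $\Theta$. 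A short case analysis on where $P_4$ lands (in a handle of $C$, in the interior of $P_3$, or at the far endpoint of $P_3$) shows that in each case the resulting graph is a generalized theta or a near-$K_4$ on three branch vertices all incident to $v$, in which one application of the theta property forces at least two positive circles through $v$ — contradiction. Hence $\deg_B(v)\in\{2,3\}$.

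If $\deg_B(v)=2$, I would suppress $v$: by the circle correspondence $e_v$ lies in a unique positive circle of $B{\downarrow}v$, and since $B{\downarrow}v$ is again a block that is neither an isthmus nor a single circle, Theorem~\ref{battery1} identifies $e_v$ as a positive edge battery, giving configuration~(2). If $\deg_B(v)=3$, pick a theta $\Theta\subseteq B$ containing $C$ with $\deg_\Theta(v)=3$; its bridge $P$ is a path from $v$ to some $w\in C$, and $P$ sits inside a single bridge $D$ of $C$ in $B$ whose attachment set contains $\{v,w\}$. I would then show that no path joins an interior vertex of $P$ to an interior vertex of a handle of $C$ (such a path turns $\Theta$ into a subdivision of $K_4$ through $v$, and by Lemma~\ref{tomk4} an edge of $C$ at $v$ would then lie in an even, hence second, positive circle, each of which contains $v$), and by an analogous theta-chase that no path besides $P$ joins two vertices of $C$. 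These two facts force every bridge of $C$ in $B$ to attach only inside $P$ or at $\{v,w\}$, so all bridges coalesce into a single bridge $D$ with attachment set $\{v,w\}$; thus $B$ is $C$-layered with exactly one bridge, of which $v$ is a vertex of attachment. Finally, since $D$ has only the two attachments $v$ and $w$, every path through $D$ runs from $v$ to $w$ and, joined with a handle of $C$, gives a circle through $v$ distinct from $C$, hence negative; so every path through $D$ makes a negative circle with either half of $C$, and Theorem~\ref{battery1} yields that every edge of $C$ is a positive edge battery — configuration~(3).

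I expect the main obstacle to be the $\deg_B(v)=3$ case: constructing the theta with $\deg_\Theta(v)=3$ is routine, but then showing — by producing $K_4$-subdivisions and applying Lemma~\ref{tomk4} together with repeated use of the theta property — that no extra attachment can appear except along $P$, so that all of $B{\setminus}C$ organizes into a single bridge attached at $v$ and $w$, is the delicate bookkeeping step. The degree bound $\deg_B(v)\le 3$ is the secondary obstacle: it is conceptually easy but requires a clean enumeration of the ways a pendant path can reattach, each subcase being closed off by a one-line parity computation via the theta property.
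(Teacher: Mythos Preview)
Your plan is correct and follows essentially the same route as the paper: the reverse direction is dispatched just as the paper does (with your configuration~(3) argument matching the paper's ``path through the bridge plus a handle is negative'' observation), and the forward direction proceeds by the same case split on $\deg_B(v)$, using a theta-plus-path construction to rule out degree~$\ge 4$ and, in the degree-$3$ case, the same $K_4$/theta obstructions to force a single two-attachment bridge at $v$. Your outline is in fact slightly more complete than the paper's, since you explicitly close the loop by invoking Theorem~\ref{battery1} to certify that every edge of $C$ is a positive battery, a step the paper leaves implicit.
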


Finally, we combine the results of this section to give a complete description of whether or not $v$ is a positive vertex battery. As a reminder, we assume that each block containing $v$ contains at least one circle.

\begin{thm} Let $\Sigma$ be a signed graph, and let $v$ be a vertex of $\Sigma$. Then $[v,C,+]$ is a positive vertex battery if and only if:

\begin{enumerate}
\item The block containing $v$ and $C$ is the unique block of the form described in Lemma \ref{vertonepos}.
\item In any other block $B$ containing $v$, $v$ is degree $2$ and $e_v$ is a balancing edge in $B{\downarrow}v$.
\end{enumerate}

\begin{proof} A positive vertex battery must be contained in exactly one block in which it is contained in a unique positive circle. It must be contained in no positive circles in all other blocks containing it. Thus, the proof is a combination of Lemma \ref{vertonepos} and Lemma \ref{vertonlyneg}.

\end{proof}

\end{thm}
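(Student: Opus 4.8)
The plan is to reduce the statement to Lemma \ref{vertonepos} and Lemma \ref{vertonlyneg} by exploiting the fact that every circle of $\Sigma$, being $2$-connected, lies inside a single block. Thus the positive circles through $v$ are partitioned among the blocks containing $v$, and $v$ is a positive vertex battery exactly when one block through $v$ contains a positive circle through $v$ while no other block through $v$ does, and that one block contains only a single positive circle through $v$. I would first record the bookkeeping attached to the standing hypothesis: since every block containing $v$ contains a circle, and a vertex of such a block always lies on a circle of that block, the condition ``$v$ lies on no positive circle of $B$'' is equivalent to ``$v$ lies only on negative circles of $B$,'' so Lemma \ref{vertonlyneg} is applicable to every block through $v$ other than the distinguished one.

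For the forward direction, assume $[v,C,+]$ and let $B_0$ be the block containing $C$ (so $v \in V(B_0)$). If $B_0$ contained a second positive circle through $v$, it would be a second positive circle of $\Sigma$ through $v$, so $C$ is the unique positive circle through $v$ in $B_0$, and Lemma \ref{vertonepos} puts $B_0$ into one of its three forms. Any other block through $v$ of one of those forms would itself contain a positive circle through $v$, again contradicting uniqueness; hence $B_0$ is the unique such block, which is condition~(1). Every remaining block $B$ through $v$ then contains no positive circle through $v$, so by the observation above $v$ lies only on negative circles of $B$, and Lemma \ref{vertonlyneg} gives $\deg_B(v) = 2$ with $e_v$ balancing in $B{\downarrow}v$ --- this is condition~(2).

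For the converse, assume (1) and (2) and run the same observation backwards: condition~(1) with Lemma \ref{vertonepos} produces exactly one positive circle $C$ through $v$ inside the distinguished block, condition~(2) with Lemma \ref{vertonlyneg} forbids any positive circle through $v$ in every other block, and since every positive circle through $v$ lives in some block through $v$, $C$ is the only positive circle of $\Sigma$ through $v$; hence $[v,C,+]$. There is no genuine obstacle in this argument; the only points needing care are the uniqueness bookkeeping --- ensuring that ``the unique block of the form in Lemma \ref{vertonepos}'' is forced, not merely permitted --- and verifying that the standing hypothesis legitimately licenses the use of Lemma \ref{vertonlyneg} on the non-distinguished blocks.
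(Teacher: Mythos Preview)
Your argument is correct and follows essentially the same route as the paper: partition the circles through $v$ among the blocks containing $v$, then invoke Lemma~\ref{vertonepos} for the distinguished block and Lemma~\ref{vertonlyneg} for the rest. The paper's proof compresses all of this into three sentences, whereas you spell out the bookkeeping (circles live in single blocks, the standing hypothesis lets you pass from ``no positive circle through $v$'' to ``only negative circles through $v$,'' and uniqueness of the distinguished block is forced), but there is no difference in strategy.
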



\begin{thebibliography}{99}
\bibitem{harary} F. Harary, \emph{On the notion of balance of a signed graph}, Michigan Math. J. 2 (1953), no. 2, 143--146.
\bibitem{harary2} F. Harary, \emph{On local balance and $N$-balance in signed graphs}, Michigan Math. J. 3 (1955--1956), 37--41.
\bibitem{soz} T. Soza{\'n}ski, \emph{Enumeration of weak isomorphism classes of signed graphs}, J. Graph Theory 4 (1980), 127--144. 
\bibitem{zaslav1} T. Zaslavsky, \emph{Signed graphs}, Discrete Appl. Math. 4 (1982), 47--74.
\end{thebibliography}
\end{document}